\newtheorem{theorem}{Theorem}
\newtheorem{lemma}{Lemma}
\theoremstyle{remark}
\newtheorem{remark}{Remark}
\newtheorem{example}{\bf Example}
\def\R{\mathbb{R}}
\def\P{\mathbb{P}}
\def\E{\mathbb{E}}
\def\FF{\mathscr{F}}
\def\SS{\mathscr{S}}
\def\PP{\mathscr{P}}
\def\NN{\mathcal N}
\renewcommand{\phi}{\varphi}
\renewcommand{\epsilon}{\varepsilon}
\newcommand{\1}{{\text{\Large $\mathfrak 1$}}}
\definecolor{mygray}{gray}{0.9}
\definecolor{deeppink}{RGB}{255,20,147}
\definecolor{mygreen}{rgb}{0.05, 0.576, 0.03}
\definecolor{myred}{rgb}{0.768, 0.09, 0.09}
\long\def\symbolfootnote[#1]#2{\begingroup
\def\thefootnote{\fnsymbol{footnote}}\footnote[#1]{#2}\endgroup}
\newcommand{\keywords}[1]{ \noindent {\footnotesize
             {\small \em Keywords and phrases.} {\sc #1} } }
\newcommand{\ams}[2]{  \noindent {\footnotesize
             {\small \em AMS {\rm 2000} subject classifications.
             {\rm Primary {\sc #1}; secondary {\sc #2}} } } }
\def\MM{\mathscr{M}}
\def\NN{\mathscr N}
\def\cP{\mathcal P}
\def\cQ{\mathcal Q}
\def\hcQ{\widehat{\cQ}}
\begin{document}

\title{\Large \bf On a representation theorem for
finitely exchangeable random vectors}
\author{{Svante Janson\footnote{svante.janson@math.uu.se, http://www2.math.uu.se/$\sim$svante,
partly supported by the Knut and Alice Wallenberg Foundation} 
\hspace{1cm} Takis Konstantopoulos\footnote{takiskonst@gmail.com, http://www2.math.uu.se/$\sim$takis, 
supported by Swedish Research Council grant 2013-4688}} \\ Uppsala University
\and  Linglong Yuan \footnote{yuanlinglongcn@gmail.com, http://linglongyuan.weebly.com}\\ Johannes-Gutenberg-Universit\"at Mainz}
\date{}
\maketitle

%
%
%

\begin{abstract}
A random vector $X=(X_1,\ldots,X_n)$ with the $X_i$ taking
values in an arbitrary measurable space $(S, \SS)$ is exchangeable
if its law is the same as that of $(X_{\sigma(1)}, \ldots, X_{\sigma(n)})$ 
for any permutation $\sigma$. We give an alternative
and shorter  proof of the representation
result (Jaynes \cite{Jay86} and Kerns and Sz\'ekely \cite{KS06})
stating that the law of $X$ is a mixture of
product probability measures  with respect to a signed mixing measure.
The result is ``finitistic'' in nature meaning that it is a matter
of linear algebra for finite $S$. The passing from finite $S$ to an
arbitrary one may pose some measure-theoretic difficulties which
are avoided by our proof.
The mixing signed measure is not unique (examples are given),
but we pay more attention to the one constructed in the proof 
(``canonical mixing measure'') by pointing out some of its
characteristics. 
The mixing measure is, in general, defined on
the space of probability measures on $S$; but for $S=\R$,
one can choose a mixing measure on $\R^n$. 

\vspace*{2mm}
\keywords{
Signed measure,
Measurable space,
Point measure,
Exchangeable, 
Symmetric,
Homogeneous polynomial 
}

\vspace*{2mm}
\ams{60G09}{60G55,62E99}

\end{abstract}

\section{Introduction}
The first result that comes to mind when talking about exchangeability
is de Finetti's
theorem concerning sequences $X=(X_1, X_2,\ldots)$
of random variables with values in some space $S$
and which are invariant under permutations of finitely many coordinates. 
This remarkable theorem \cite[Theorem 11.10]{KALL02} states that
the law of such a sequence is a mixture of product measures:
let $S^\infty$ be the product of countably many copies of $S$
and let $\pi^\infty$ be the product measure on $S^\infty$ with marginals $\pi
\in \PP(S)$ (the space of probability measures on $S$); then
\[
\P(X \in \cdot) = \int_{\PP(S)} \pi^\infty(\cdot)\, \nu(d\pi),
\]
for a uniquely defined probability measure $\nu$ which we call
a mixing (or directing) measure. 

In Bayesian language, this says that any exchangeable random
sequence is obtained by first picking a probability distribution $\pi$ from
some prior (probability distribution on the space
of probability distributions) and then letting
the $X_i$ to be i.i.d.\ with common law $\pi$. 
As Dubins and Freedman \cite{DF79} show, de Finetti's theorem 
does not hold for an arbitrary measurable space $S$. Restrictions
are required. One of the most general cases for which the theorem
does hold is that of a Borel space $S$, i.e., a space which
is isomorphic (in the sense of existence of a measurable
bijection with measurable inverse) to a Borel subset of $\R$.
Indeed, one of the most elegant proofs of the theorem
can be found in Kallenberg \cite[Section 1.1]{Ka05}
from which it is evident that the main ingredient is the ergodic
theorem and that the Borel space is responsible for the existence
of regular conditional distributions.

For finite dimension $n$, however, things are different.
Let $S$ be a set together with a $\sigma$-algebra $\SS$,
and let $X_1, \ldots, X_n$ be measurable functions from a
measure space $(\Omega, \FF)$ into $(S, \SS)$.
Under a probability measure $\P$ on $(\Omega, \FF)$,
assume that $X=(X_1, \ldots, X_n)$ 
is such that 
$\sigma X := (X_{\sigma(1)}, \ldots, X_{\sigma(n)})$ has
the same law as $(X_1, \ldots, X_n)$
for any permutation $\sigma$ of $\{1,\ldots, n\}$,
i.e., that $\P(\sigma X \in B) = \P(X \in B)$ for all $B \in \SS^n$,
where $\SS^n$ is the product $\sigma$-algebra on $S^n$.
In such a case, we say that $X$ is $n$-exchangeable (or simply exchangeable).

\begin{example}
\label{Ex1}
Simple examples show that a finitely exchangeable random vector
may not be a mixture of product measures.
For instance, take $S=\{1,\ldots,n\}$, with $n\ge2$,
and let $X=(X_1, \ldots, X_n)$ take values in $S^n$ such
that $\P(X=x) = 1/n!$ when $x=(x_1, \ldots, x_n)$ is a permutation
of $(1,\ldots, n)$, and $\P(X=x)=0$ otherwise.
Clearly, $X$ is $n$-exchangeable.
Suppose that the law of $X$ is a mixture of product measures.
Since the space of probability measures $\PP(S)$ can naturally be
identified with the set $\Sigma_n:=\{(p_1,\ldots,p_n)\in \R^n:\,
p_1, \ldots, p_n \ge 0,\, p_1+\cdots+p_n=1\}$, the assumption
that the law of $X$ is a mixture of product measures
is equivalent to the following: there is a random variable $p=(p_1,\ldots,p_n)$
with values in $\Sigma_n$ such that
$\P(X=x) = \E [\P(X=x | p)]$, where $\P(X=x | p) =
p_{x_1}\cdots p_{x_n}$ for all $x_1,\ldots,x_n \in S$.
But then, for all $i\in S$, $0=\P(X_1=\cdots=X_n=i) = \E [p_i^n]$,
implying that $p_i=0$, almost surely, for all $i \in S$, an obvious
contradiction.
\end{example}

However, Jaynes \cite{Jay86} showed that (for the $|S|=2$ case)
there is mixing provided that signed measures are allowed; see
equation \eqref{xi} below.
Kerns and Sz\'ekely \cite{KS06} observed that the Jaynes result
can be generalized to {\em an arbitrary} measurable space $S$,
but the proof in \cite{KS06} 
requires some further explicit arguments.
In addition, \cite{KS06} uses a non-trivial algebraic result 
without a proof.
Our purpose in this note is to give an alternative, shorter, and 
rigorous proof of 
the representation result (see Theorem \ref{mth} below)
but also to briefly discuss some consequences and
open problems (Theorem \ref{corob} and Section \ref{Seclast}).
An an independent proof of an
algebraic result needed in the proof of Theorem \ref{mth}
is presented in the appendix as Theorem \ref{hompol}.
To the best of our knowledge, the proof is new and, possibly,
of independent interest.

\begin{theorem}[Finite exchangeability representation theorem]
\label{mth}
Let $X_1, \ldots, X_n$ be random variables on some  probability space
$(\Omega, \FF, \P)$ with values in a measurable space $(S, \SS)$.
Suppose that the law of $X=(X_1, \ldots, X_n)$ is exchangeable.
Then there is a signed measure $\xi$ on $\PP(S)$
\begin{equation}
\label{xi}
\P(X \in A) = \int_{\PP(S)} \pi^n(A)\, \xi(d\pi),
\quad A \in \SS^n,
\end{equation}
where $\pi^n$ is the product of $n$ copies of $\pi \in \PP(S)$.
\end{theorem}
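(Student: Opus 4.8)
The plan is to isolate the finitistic core of the statement — which is pure linear algebra — from its measure-theoretic wrapping, and to produce $\xi$ by an explicit formula that never leaves the class of finite signed measures, so that no limiting construction on the infinite-dimensional space $\PP(S)$ is required. First I would observe that the candidate right-hand side of \eqref{xi}, namely $A\mapsto\int_{\PP(S)}\pi^n(A)\,\xi(d\pi)$, is itself a finite signed measure on $(S^n,\SS^n)$ as soon as $\xi$ has finite total variation: the map $\pi\mapsto\pi^n(A)$ is measurable and bounded by $1$, and dominated convergence against $|\xi|$ gives countable additivity. Since the measurable rectangles $B_1\times\cdots\times B_n$ form a $\pi$-system generating $\SS^n$, the uniqueness theorem for finite signed measures reduces \eqref{xi} to the single family of identities
\[
\P(X\in B_1\times\cdots\times B_n)=\int_{\PP(S)}\prod_{i=1}^n\pi(B_i)\,\xi(d\pi),\qquad B_1,\dots,B_n\in\SS .
\]
Everything thus comes down to matching the mixed moments $\prod_i\pi(B_i)$ of $\xi$ against the law $\mu:=\P(X\in\cdot)$.

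For finite $S=\{1,\dots,k\}$ this is a finite-dimensional statement. Writing $\pi=(q_1,\dots,q_k)$ on the simplex, $\pi^n$ assigns to a configuration of type $\lambda=(\lambda_1,\dots,\lambda_k)$ with $\sum_j\lambda_j=n$ the monomial $\prod_j q_j^{\lambda_j}$, so \eqref{xi} asks that an arbitrary exchangeable assignment of these type-probabilities be realised as the degree-$n$ moments of a signed measure $\xi$ on the simplex. Equivalently, the product measures $\pi^n$ must span the space of exchangeable signed measures on $S^n$; dually, this is the assertion that every homogeneous polynomial of degree $n$ in $k$ variables is a linear combination of $n$-th powers of linear forms. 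This is precisely the algebraic input furnished by Theorem \ref{hompol}, and it settles the finite case, the normalisation to \emph{probability} measures on the simplex being recovered by homogeneity.

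To reach an arbitrary $(S,\SS)$ without any projective limit I would look for $\xi$ supported on empirical measures. Let $L\colon S^n\to\PP(S)$, $L(x)=\tfrac1n\sum_{i=1}^n\delta_{x_i}$, and seek a finite signed measure $\eta$ on $S^n$ with $\xi=L_*\eta$; then $\xi$ lives on $\PP(S)$ and is finite for free, and on a rectangle
\[
\int_{\PP(S)}\prod_{l=1}^n\pi(B_l)\,\xi(d\pi)=\int_{S^n}\prod_{l=1}^n\Big(\frac1n\sum_{i=1}^n\1_{B_l}(y_i)\Big)\,\eta(dy)=\frac1{n^n}\sum_{\phi}\int_{S^n}\prod_{l=1}^n\1_{B_l}\big(y_{\phi(l)}\big)\,\eta(dy),
\]
the sum over all maps $\phi\colon\{1,\dots,n\}\to\{1,\dots,n\}$. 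The terms with $\phi$ a bijection reproduce symmetrised rectangle-masses of $\eta$, while the non-injective $\phi$ contribute lower-dimensional ``diagonal'' terms indexed by the fibre partition $\ker\phi$. I would then take $\eta=\sum_\beta c_\beta\,(T_\beta)_*\mu$, the sum over set partitions $\beta$ of $\{1,\dots,n\}$, where $T_\beta\colon S^n\to S^n$ duplicates coordinates according to $\beta$ and the scalars $c_\beta$ solve the triangular inclusion–exclusion system over the partition lattice that makes the diagonal terms cancel and the bijective terms rebuild $\mu$. Each $(T_\beta)_*\mu$ is a measurable pushforward of the given law, so $\eta$, and hence $\xi$, is a genuine finite signed measure for \emph{any} measurable space; this is exactly how the measure-theoretic difficulties are side-stepped. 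The case $n=2$, where $\eta=2\mu-(T_{\{1,2\}})_*\mu$, already exhibits the mechanism.

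The main obstacle, and the only step that is not bookkeeping, is the algebraic guarantee that such coefficients $c_\beta$ exist and that the cancellation is exact for every $n$ — concretely, that prescribed degree-$n$ moments on the simplex are attainable by a signed measure, which is once more the content of Theorem \ref{hompol}. Granting it, substituting the chosen $\eta$ into the rectangle identity and checking the cancellation of the diagonal terms completes the argument, and the resulting identity $\int\pi^n(A)\,\xi(d\pi)=\mu(A)$ extends from rectangles to all $A\in\SS^n$ by the uniqueness step of the first paragraph. Finally, since $\xi=L_*\eta$ and $L$ factors through $S^n$, for $S=\R$ this automatically presents the mixing measure as the image of a signed measure on $\R^n$, matching the refinement promised in the abstract.
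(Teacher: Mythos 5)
Your overall architecture is sound and, in fact, closer to the paper than you may realize: the reduction of \eqref{xi} to rectangles is fine, the finite-$S$ case is correctly identified with Theorem \ref{hompol}, and your guess for the shape of the mixing measure is exactly right --- the paper's canonical $\xi$ is precisely of your form $L_*\eta$ with $\eta$ a universal linear combination of diagonal pushforwards of the law $\mu$ of $X$ (in the paper's notation $\phi_x(\lambda/n)=L(T_\lambda x)$ in \eqref{psix}, and your $n=2$ measure $\eta=2\mu-(T_{\{1,2\}})_*\mu$ reproduces \eqref{xi2}). The gap is in the one step you yourself flag as the main obstacle: the existence of the universal coefficients $c_\beta$. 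If $N(\beta,\gamma)$ denotes the number of maps $\phi:\{1,\dots,n\}\to\{1,\dots,n\}$ whose pullback of the partition $\beta$ equals $\gamma$, then requiring your cancellation to hold for every exchangeable $\mu$ on every $(S,\SS)$ forces the counting identity
\begin{equation*}
\sum_\beta c_\beta\, N(\beta,\gamma) \;=\; n^n\, \1_{\gamma=\hat 0}
\qquad\text{for all partitions } \gamma,
\end{equation*}
where $\hat 0$ is the partition into singletons. This system is \emph{not} triangular in any ordering of the partition lattice: $N(\beta,\gamma)>0$ whenever $\gamma$ has at most as many blocks as $\beta$ (map each block of $\gamma$ constantly to a point of a distinct block of $\beta$), so two distinct partitions with equally many blocks interact in \emph{both} directions. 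Worse, the same-level blocks of the matrix can be singular: for $n=3$ the three partitions of type $(2,1)$ produce a $3\times3$ block whose nine entries all equal $6$. So ``solve the triangular system by back-substitution'' is not available; the system is solvable, but only after a symmetry reduction (taking $c_\beta$ constant on partition types) together with a compatibility argument, and that argument is the real content of the theorem, not bookkeeping.

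Nor does Theorem \ref{hompol} by itself supply this. That theorem is a statement about homogeneous polynomials in finitely many variables; it gives the finite-$S$ case, i.e.\ the invertibility of $W(\lambda,\nu)=\binom{n}{\nu}\lambda^\nu$, but it says nothing, on its own, about coefficients valid simultaneously for all measurable spaces and all exchangeable laws. The missing idea is a transfer mechanism from the one universal finite identity to arbitrary $S$. The paper does exactly this: it takes $\bm u_{\nu_n}=\sum_{\lambda\in\NN_n(n)} M_n(\lambda)\,\lambda^n$ on $[n]^n$ (i.e.\ \eqref{unu=} with $T=[n]$ and $\nu=\nu_n$) and pushes it forward along $\phi_x:i\mapsto x_i$, using that pushforward commutes with $n$-fold products, $\phi^n_x(\pi^n)=\phi_x(\pi)^n$; this yields $\psi_x$ with $\int_{\PP(S)}\tau^n\,\psi_x(d\tau)=\bm U_x$ for every $x\in S^n$, and then $\xi=\E\,\psi_X$ after checking measurability of $x\mapsto\psi_x$. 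That pushforward argument is precisely what converts Theorem \ref{hompol} into your coefficients (indexed in the paper by $\lambda\in\NN_n(n)$ rather than by set partitions, which is why no partition-lattice inversion is ever needed). Without it, or some substitute proof of the counting identity above, your argument is incomplete at its central step.
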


We stress that the theorem does not ensure uniqueness of $\xi$.
\begin{example}
\label{nonu}
To see this in an example, consider Example \ref{Ex1} 
with $n=2$, that is, let $S=\{1,2\}$ and let $X=(X_1, X_2)$ 
take values $(1,2)$, $(2,1)$, $(1,1)$, $(2,2)$ with probabilities
$1/2$, $1/2$, $0$, $0$, respectively.
We identify $\PP(S)$ with the interval $[0,1]$, via $\pi\{1\}=p$,
$\pi\{2\}=1-p$, for $\pi \in \PP(S)$. We give
three different signed measures that can be used in the representation.
\\
(i)  Let $\xi$ be the signed measure on $[0,1]$ defined by
\[
\xi = -\frac{1}{2} \delta_0 - \frac{1}{2} \delta_1 + 2 \delta_{1/2}.
\]
Then $\P(X_1=1, X_2=2) = \int_{[0,1]} p(1-p)\, \xi(dp) = 1/2 =
\P(X_1=2, X_1=1)$, 
while $\P(X_1=1, X_2=1) = \int_{[0,1]} p^2 \, \xi(dp)=0
= \P(X_1=2, X_2=2)$.
\\
(ii) Let
\[
\xi = -\frac{5}{8} \delta_0
-\frac{5}{8} \delta_1
+\frac{9}{8} \delta_{1/3}
+\frac{9}{8} \delta_{2/3}.
\]
Again, $\int_{[0,1]} p(1-p)\, \xi(dp) =1/2$,
$\int_{[0,1]} p^2\, \xi(dp) = \int_{[0,1]} (1-p)^2\, \xi(dp) =0$.
\\
(iii)  
Let $\xi$ be a signed measure with density
\[
f(p) := -\frac{7}{2} \cdot \1_{p \le 1/3 \text{ or } p \ge 2/3}
+ 10 \cdot \1_{1/3 < p < 2/3}.
\]
We can easily see that $\int_0^1 f(p) dp =1$, $\int_0^1 p^2 f(p)dp=0$,
$\int_0^1 p(1-p) f(p) dp=1/2$.

\end{example}

\begin{remark}
\label{rem1}
The difference between this situation and the one in de Finetti's setup
is that a finitely exchangeable random vector 
$(X_1, \ldots, X_n)$  is not necessarily extendible to an infinite
sequence $(X_1, \ldots, X_n, X_{n+1}, \ldots)$ that is exchangeable.
(See Examples \ref{lastex1} and \ref{lastex2} below.)
If it were, then the signed measure $\xi$
could have been chosen as a probability measure
(and would then have been unique).
The question of extendibility of an $n$-exchangeable $(X_1, \ldots, X_n)$
to an $N$-exchangeable $(X_1, \ldots, X_N)$, for some $N > n$ (possibly
$N=\infty$) is treated in the  sequel paper \cite{KYextend}
that strongly uses the framework and results of the present paper.
Assuming such extendibility,
Diaconis and Freedman \cite{DIA77, DF80} show that the total variation 
distance of an $n$-exchangeable probability measure on $S^n$ from the set
of mixtures of product probability measures
is at most $n(n-1)/N$ when $S$ is an infinite set
(and at most $2|S|n/N$ if $S$ is finite.)

\end{remark}

When $S=\R$, it is possible to say more than in Theorem \ref{mth}:
\begin{theorem}
\label{corob}
Let $(X_1, \ldots, X_n)$ be an $n$-exchangeable random vector in
$\R^n$, endowed with the Borel $\sigma$-algebra.
Then there is a bounded signed measure
$\eta(d\theta_1, \ldots, d\theta_n)$, 
such that
$\P(X\in A) = \int_{\R^n} \pi_{\theta_1, \ldots, \theta_n}^n(A)\, \eta
(d\theta_1, \ldots, d\theta_n)$,
where $\pi_{\theta_1, \ldots,\theta_n}$ is an element of $\PP(S)$ depending
measurably on the $n$ parameters $(\theta_1, \ldots, \theta_n)$.
\end{theorem}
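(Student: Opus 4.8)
The plan is to use the explicit empirical-measure parametrization. Write $\theta=(\theta_1,\ldots,\theta_n)$ and set
\[
\pi_{\theta}:=\pi_{\theta_1,\ldots,\theta_n}=\frac1n\sum_{i=1}^n\delta_{\theta_i},
\]
which visibly depends measurably on $\theta\in\R^n$; the goal is then to produce a bounded signed measure $\eta$ on $\R^n$ with $\P(X\in A)=\int_{\R^n}\pi_\theta^{\,n}(A)\,\eta(d\theta)$. Let $\mu$ denote the (symmetric) law of $X$ and let $\Phi(\eta):=\int_{\R^n}\pi_\theta^{\,n}\,\eta(d\theta)$. Since $\pi_{\sigma\theta}=\pi_\theta$ for every permutation $\sigma$, the value of $\Phi$ is unchanged when $\eta$ is replaced by its symmetrization, so it suffices to look for a symmetric $\eta$, and $\Phi$ is then a linear map from symmetric signed measures to symmetric signed measures.

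The structural observation I would build on is that
\[
\pi_\theta^{\,n}=\frac1{n^n}\sum_{j\colon\{1,\ldots,n\}\to\{1,\ldots,n\}}\delta_{(\theta_{j(1)},\ldots,\theta_{j(n)})},
\]
where the injective maps $j$ (the $n!$ permutations) contribute, after symmetrization of $\eta$, exactly $\frac{n!}{n^n}\eta$, while every non-injective $j$ produces a point with a repeated coordinate. Hence $\Phi=\frac{n!}{n^n}I+\Psi$, with $\Psi(\eta)$ supported on the diagonal $\Delta=\{x\in\R^n:x_i=x_j\text{ for some }i\neq j\}$. I would then stratify $\R^n=\bigsqcup_{m=1}^n D_m$ by the number $m$ of distinct coordinates and decompose $\mu=\sum_m\mu_m$, $\eta=\sum_m\eta_m$ accordingly. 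A short combinatorial inspection shows that $\Psi$ sends measures on $D_m$ into measures on $\bigcup_{m'\le m}D_{m'}$, and that its \emph{same-stratum} part preserves the set of $m$ distinct values carried by a configuration, merely redistributing the multiplicities. Thus $\mu=\Phi(\eta)$ is block-triangular and can be solved top-down, from $m=n$ (where $\Phi=\frac{n!}{n^n}I$) down to $m=1$, provided each diagonal block is invertible.

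The heart of the matter is that invertibility. Fixing $m$ distinct values $v_1<\cdots<v_m$, a configuration with multiplicities $a=(a_1,\ldots,a_m)$ (a composition of $n$ into $m$ positive parts) has empirical measure $\sum_i(a_i/n)\delta_{v_i}$, and the mass that $\pi_\theta^{\,n}$ assigns to the symmetric class with multiplicities $b=(b_1,\ldots,b_m)$ is $\binom{n}{b_1,\ldots,b_m}\prod_i(a_i/n)^{b_i}$. The same-stratum block is therefore the \emph{value-independent} matrix with entries proportional to $\prod_i a_i^{b_i}$, indexed by the (finitely many) compositions $a,b$ of $n$ into $m$ positive parts. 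Its invertibility is exactly the statement that these homogeneous monomials are nondegenerate at the integer points in question, which is the algebraic input supplied by Theorem \ref{hompol}. Inverting these fixed finite matrices fiberwise, top-down, determines each $\eta_m$ from $\mu_m$ and the already-computed higher strata.

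It remains to assemble $\eta=\sum_m\eta_m$ as a genuine bounded signed measure. I would do this by disintegrating each $\mu_m$ along the measurable map sending a point of $D_m$ to its ordered set of distinct values in $\{v_1<\cdots<v_m\}$ — a standard operation on the Borel space $\R^n$ — and applying the fixed block inverses on each fiber; measurability of $\eta$ then follows from measurability of the disintegration, while the total variation stays finite because there are only finitely many strata and the block inverses are fixed bounded matrices, so $\|\eta\|$ is dominated by some $C(n)\,\|\mu\|=C(n)$. The main obstacle I anticipate is precisely this last step together with the invertibility input: turning the fiberwise finite-dimensional solve into a measurable, uniformly bounded signed measure, and verifying that Theorem \ref{hompol} indeed yields invertibility of every diagonal block. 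As a consistency check, carrying out this procedure for $n=2$, $S=\{1,2\}$ reproduces the canonical signed measure of Example \ref{nonu}(i).
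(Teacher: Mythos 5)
Your parametrization $\pi_{\theta_1,\ldots,\theta_n}=\frac1n\sum_{i=1}^n\delta_{\theta_i}$ is exactly the one the paper intends, and your strategy can be made to work, but it is a genuinely different---and much longer---route than the paper's. The paper performs no new linear solve at this stage: it observes that the canonical mixing measure $\xi=\E\psi_X$ constructed in the proof of Theorem \ref{mth} is, by \eqref{psix}, already supported on the set $\frac1n\NN_n(\R)$ of normalized point measures of total mass $n$, identifies $\frac1n\NN_n(\R)$ with a cone in $\R^n$ (the open cone $\theta_1<\cdots<\theta_n$ for measures with $n$ distinct atoms, plus boundary pieces for the lower strata), and simply transports $\xi$ through this identification to get $\eta$. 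Your proposal instead reconstructs $\eta$ from scratch by a stratified inversion, i.e.\ it essentially re-proves Theorem \ref{mth} in the case $S=\R$ rather than reusing it. What this buys is a self-contained argument and an explicit picture of how mass moves between strata; moreover, since your triangular system has a unique symmetric solution, it necessarily reproduces the canonical measure, as your $n=2$ computation recovering Example \ref{nonu}(i) confirms. What it costs are precisely the two points you flag, one of which needs a genuine fix.

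The assembly step is the easier of the two: because your diagonal blocks are value-independent, each $\eta_m$ is a finite linear combination of pushforwards, under explicit multiplicity-redistributing measurable maps, of restrictions of $\mu$ and of the previously constructed $\eta_{m'}$, $m'>m$; no disintegration is actually needed, and the total variation of $\eta$ is bounded by a constant depending only on $n$. The invertibility step, however, is \emph{not} ``exactly'' Theorem \ref{hompol} as you assert: your same-stratum block is indexed by strict compositions (all parts $\ge 1$), so it is a proper principal submatrix of the matrix $W$ of \eqref{Wdef}, which is indexed by all of $\NN_n(T)$ (zero entries allowed), and a principal submatrix of an invertible matrix need not be invertible. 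To close the gap, note that $W(\lambda,\nu)=\binom{n}{\nu}\lambda^\nu$ vanishes unless $\supp(\nu)\subseteq\supp(\lambda)$; hence, ordering $\NN_n(T)$ by any linear extension of the partial order of support inclusion, $W$ is block-triangular with square diagonal blocks, and these diagonal blocks are precisely your strict-composition matrices (one for each support set, depending only on its cardinality). Therefore $\det W$ is the product of the determinants of these blocks, and the invertibility of $W$ supplied by Theorem \ref{hompol} forces every block to be invertible. With that lemma inserted, your top-down solve is sound.
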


\section{Preliminaries and notation}
We make use of the following notations and terminology in the paper.
If $S$ is a set with a $\sigma$-algebra $\SS$, then $\PP(S)$
is the set of probability measures on $(S, \SS)$. The space $\PP(S)$
is equipped with the $\sigma$-algebra generated by sets
of the form $\{\pi \in \PP(S):\, \pi(B) \le t\}$, $B \in \SS$,
$t\in \R$.
We shall write $\PP(S, \SS)$ if we wish to emphasize the role of 
the $\sigma$-algebra.\footnote{In other words,
when we write $\PP(S, \SS)$, we mean that $\PP(S)$ is given the 
$\sigma$-algebra generated by sets
$\{\pi \in \PP(S):\, \pi(B) \le t\}$, $B \in \SS$, $t \in \R$.}
Similarly, $\MM(S)$ or $\MM(S, \SS)$
will be the space of bounded signed measures, equipped
with a $\sigma$-algebra as above.
In particular, $\MM(\PP(S))$ is the space of bounded signed measures
on $\PP(S,\SS)$.
A random measure on $S$ is a measurable mapping from $\Omega$ into
$\PP(S)$ and a random signed measure on $\PP(S)$
is a measurable mapping from $\Omega$ into $\MM(\PP(S))$.
The measure $\xi$ in Theorem \ref{mth} is an element of $\MM(\PP(S))$.
The delta measure $\delta_a$ at a point $a\in S$ is, as usual,
the set function $\delta_a(B) := \1_{a \in B}$, 
$B \subset S$. A finite point measure is a finite linear combination
of delta measures where the coefficients are nonnegative integers.
We let $\NN(S)$ be the set of finite point measures on $S$ and 
$\NN_n(S)$ the set of point measures $\nu$ such that $\nu(S)=n$.
The symbol $(\nu)!$ is defined as
\[
(\nu)! := \prod_{a \in S} \nu\{a\} !
\]
where $\nu\{a\}$ is the value of $\nu$ at the singleton $\{a\}$ and
where the product is over the support of $\nu$ 
($0!:=1$).
The symbol $\SS^n$ stands for the product $\sigma$-algebra on $S^n$.
If $\pi \in \PP(S)$ then $\pi^n \in \PP(S^n)$ is the product measure
of $\pi$ with itself, $n$ times.
If $x=(x_1, \ldots, x_n) \in S^n$ then the {\em type} of $x$ 
is the element $\epsilon_x$ of $\NN_n(S)$ defined by 
\[
\epsilon_x:= \sum_{i=1}^n \delta_{x_i}.
\]
The set $S^n(\nu) \subset S^n$ is defined by,
for $\nu\in\NN_n(S)$,
\[
S^n(\nu) := \{y \in S^n:\, \epsilon_y = \nu\}.
\]
It is a finite set with cardinality 
\[
\binom{n}{\nu} := \frac{n!}{(\nu)!}.
\]
We let $\bm u_\nu$ be the uniform probability measure on $S^n(\nu)$,
that is, 
\[
\bm u_\nu = \binom{n}{\nu}^{-1} \sum_{z \in S^n(\nu)} \delta_z.
\]
If $\SS$ is too coarse, then $S^n(\nu)$ may not belong to $\SS^n$. 
This is not a problem when $\SS$ is, say, the Borel $\sigma$-algebra of a 
Hausdorff space, but we wish to prove the result without any 
topological assumptions.
Moreover, notice that
\begin{equation}
\label{Sdecomp}
S^n = \bigcup_{\nu \in \NN_n(S)} S^n(\nu),
\end{equation}
since $y \in S^n(\epsilon_y)$ for all $y \in S^n$. The sets in the
union are pairwise disjoint because $S^n(\nu) \cap S^n(\nu') = \varnothing$
if $\nu$ and $\nu'$ are distinct elements of $\NN_n(S)$.
If $\sigma$ is a permutation of $\{1,\ldots,n\}$ and $x \in S^n$,
then $\sigma x := (x_{\sigma(1)}, \ldots, x_{\sigma(n)})$.

\section{Proof of the finite exchangeability representation theorem}

By exchangeability, for any $B \in \SS^n$,
\[
\P(X \in B) =\frac{1}{n!}
\sum_\sigma \P(\sigma X\in B)
=\E \frac{1}{n!} \sum_\sigma \delta_{\sigma X}(B) = \E \bm U_X(B)
\]
where the sum is taken over all permutations $\sigma$ of $\{1,\ldots,n\}$,
and where
\[
\bm U_x := \frac{1}{n!} \sum_\sigma \delta_{\sigma x},
\quad x \in S^n.
\]
Notice that the map $x \mapsto \bm U_x$ is a measurable 
function from $(S^n, \SS^n)$ into $\PP(S^n, \SS^n)$, and, since
$X$ is a measurable function from $(\Omega, \FF)$ into $(S^n,\SS^n)$,
we have that  $\bm U_X$ is a random element of $\PP(S^n,\SS^n)$.
The mean measure $\E \bm U_X$ is the probability law of $X$.

Forgetting temporarily that our original space is $S$,
consider a finite set $T$ and let $Q$ be an exchangeable
probability measure on $T$.          
Then, for all $\nu \in \NN_n(T)$, $Q$ assigns the same value to
every singleton of  $T^n(\nu)$.
Hence 
\begin{equation}\label{star=}
Q = \sum_{\nu \in \NN_n(T)} Q(T^n(\nu)) \, \bm u_\nu,
\end{equation}
where $\bm u_\nu$ is the uniform probability measure on $T^n(\nu)$.
In particular, let $Q=\pi^n$, where $\pi \in \PP(S)$.
It is easy to see (multinomial distribution) that
\[
\pi^n(T^n(\nu)) = \binom{n}{\nu}\, \pi^\nu,
\]
where $\pi^\nu := \prod_{a \in T} \pi\{a\}^{\nu\{a\}}$
(adopting the convention $0^0=1$).
Specialize further by letting $\pi=\lambda/n$ where $\lambda \in \NN_n(T)$.
Canceling a factor, \eqref{star=} gives
\begin{equation}
\label{plus=}
\lambda^n = \sum_{\nu \in \NN_n(T)}  \binom{n}{\nu}\, \lambda^\nu
\, \bm u_\nu.
\end{equation}
Let $W$ be a matrix with entries $W(\lambda, \nu) := \binom{n}{\nu}\lambda^\nu$,
$\lambda, \nu \in \NN_n(T)$.
This is essentially the multinomial Dyson matrix;
see \eqref{Wdef} in the Appendix and the discussion therein.
Let $M$ be the inverse of $W$; see \eqref{minus} in the Appendix.
From \eqref{minus} and \eqref{plus=} we have
\begin{equation}
\label{unu=}
\bm u_\nu = \sum_{\lambda \in \NN_n(T)} M(\nu,\lambda) \, \lambda^n.
\end{equation}
This is an equality between measures on $T^n$.

Specialize further by letting $T=[n]:=\{1,\ldots,n\}$.
Fix $x=(x_1,\ldots,x_n) \in S^n$.
Define $\phi_x :[n] \to S$ by $\phi_x(i) = x_i$, $i=1,\ldots,n$.
This induces a linear map $\MM([n]) \to \MM(S)$, also denoted by $\phi_x$,
by the formula $\phi_x(\delta_i)=\delta_{\phi_x(i)} = \delta_{x_i}$,
$i \in [n]$,
and extended by linearity:
$\phi_x(\sum_{i=1}^n c_i \delta_i) = \sum_{i=1}^n c_i \phi_x(\delta_i)$.
Define $\phi_x^n :[n]^n \to S^n$ by
$\phi_x^n(i_1, \ldots, i_n) = (\phi_x(i_1), \ldots, \phi_x(i_n))$.
This again induces a linear map $\MM([n]^n) \to \MM(S^n)$, also
denoted by $\phi_x^n$, by the formula $\phi_x^n (\delta_j)
= \delta_{\phi_x^n(j)}$, $j \in [n]^n$, and extended by linearity.
We can then easily show that $\phi_x^n(\mu_1 \times \cdots \times \mu_n)
= \phi_x(\mu_1) \times \cdots \times \phi_x(\mu_n)$ for any 
$\mu_1,\ldots,\mu_n \in \MM([n])$.
Let now $\nu_n$ be the measure on $[n]$ with $\nu_n\{i\}=1$, $i=1,\ldots,n$.
Let $\NN_n(n):= \NN_n([n])$. Then \eqref{unu=} yields
\[
\bm u_{\nu_n} = \sum_{\lambda\in \NN_n(n)} M_n(\lambda) \lambda^n,
\]
where $M_n(\lambda) := M(\nu_n, \lambda)$.
It is easy to see that
\[
\bm u_{\nu_n} = \frac{1}{n!} \sum_\sigma \delta_{\sigma \iota} = \bm U_\iota,
\]
where $\iota :=(1,\ldots,n)$ and where the sum
is taken over all permutations $\sigma$ of $[n]$.
The last two displays are equalities between measures
on $\{1,\ldots,n\}^n$.

For each $x\in S^n$ define 
\begin{equation}\label{psix}
\psi_x 
:= \sum_{\lambda \in \NN_n(n)} n^n M_n(\lambda) \delta_{\phi_x(\lambda/n)},
\end{equation}
a signed measure on $\PP(S)$.  We are going to show that
\\[1mm]
(i) $\int_{\PP(S)} \tau^n \psi_x(d\tau) = \bm U_x$,
\\
(ii) $x \mapsto \psi_x$ is a measurable map $S \to \MM(\PP(S))$.
\\[1mm]
To show (i), observe, directly from the definition of $\psi_x$, that
\begin{equation}
\int_{\PP(S)} \tau^n \psi_x(d\tau) =
\sum_{\lambda \in \NN_n(n)} n^n M_n(\lambda)
\phi_x(\lambda/n)^n.
\end{equation}
But
$\phi_x(\lambda/n)^n = \phi_x^n((\lambda/n)^n) = n^{-n} \phi_x^n(\lambda^n)$
and so
\begin{multline}
\int_{\PP(S)} \tau^n \psi_x(d\tau) =
\sum_{\lambda \in \NN_n(n)}  M_n(\lambda)
\phi_x^n(\lambda^n)
= \phi_x^n \left(\sum_{\lambda \in \NN_n(n)} M_n(\lambda) \lambda^n \right)
= \phi_x^n(\bm u_{\nu_n}) = \phi_x^n(\bm U_\iota)
\\ 
= \frac{1}{n!} \sum_\sigma \phi_x^n (\delta_{\sigma \iota})
= \frac{1}{n!} \sum_\sigma \delta_{\phi_x^n(\sigma\iota)}
= \frac{1}{n!} \sum_\sigma \delta_{\sigma x} = \bm U_x.
\label{Nsmall=}
\end{multline}
To show (ii), we first observe that  
$\phi_x(\lambda/n) = \sum_{i=1}^n\frac{\lambda_i}{n}
\delta_{x_i}$ and that the maps $x \mapsto \delta_{x_i}$,
$S^n\to \PP(S)$, are measurable.
It then follows that $x \mapsto \phi_x(\lambda/n)$,
$S^n \to \PP(S)$, is measurable.		
Also, the map $\mu \mapsto \delta_\mu$, $\PP(S) \to \MM(\PP(S))$, is measurable,
Since composition of measurable functions is measurable,
we have that $x \mapsto \delta_{\phi_x(\lambda/n)}$ is a measurable function
from $S$ into $\MM(\PP(S))$.

Since $x \mapsto \psi_x$ is measurable
we have that $\psi_X$ is a random element of $\MM(\PP(S))$
and thus $\xi:=\E \psi_X$ is a well-defined element of $\MM(\PP(S))$. 
Note that $\psi_X$ is
also bounded so there is no problem with taking the expectation.
On the other hand, since $\bm U_X = \int_{\PP(S)} \tau^n \psi_X(d\tau)$,
a.s.,  and since $\E \bm U_X$ is the probability distribution of $X$,
the assertion \eqref{xi} follows with $\xi = \E \psi_X$.

\section{Additional results and applications}
\label{Seclast}

\subsection{Proof of Theorem \ref{corob}}\label{SSpf2}
By \eqref{psix},
the integration on the left-hand side of
\eqref{Nsmall=} actually takes place over the set
$\{\phi_x(\lambda/n):\lambda\in\NN_n(n)\}$;
since $\phi_x(\lambda/n)=\frac{1}n\phi_x(\lambda)$ and
$\phi_x(\lambda)\in\NN_n(S)$ when $\lambda\in\NN_n(n)$, 
it follows that it suffices to integrate over
$\frac{1}{n} \NN_n(S) := \{\frac{1}{n} \lambda:\, \lambda \in \NN_n(S)\}
\subset \PP(S)$.
Let $S=\R$. Then $\frac{1}{n} \NN_n(\R)$ is a measurable subset of $\PP(\R)$.
Then, for any Borel subset $B$ of $\R^n$, 
\[
\P(X \in B) = \int_{\frac{1}{n} \NN_n(\R)} \tau^n(B) \, \xi(d\tau).
\]
We can write
$\NN_n(\R) = \bigcup_{d=1}^n \NN_{n,d}(\R)$
where $\NN_{n,d}(\R)$ is the set of
all point measures $\nu$ on $\R$ with total mass equal to $n$
and support of size $d$.
The set $\NN_{n,n}(\R)$ can be identified with all
$(x_1, \ldots, x_n) \in \R^n$ such that $x_1 < \cdots < x_n$.
This is an open cone $C$. Each of the other sets, $\NN_{n,d}(\R)$,
$d=1,\ldots, n-1$, corresponds to a particular subset of the
boundary of $C$. Therefore, we can replace the integration by
integration on a cone of $\R^n$. 
\qed

This result tells us that in order to represent an $n$-exchangeable
random vector in $\R^n$ as an integral against an unknown signed measure
we may as well search for a signed measure on a space of dimension 
$n$ rather than on the space $\PP(\R)$.
Of course, we chose $S=\R$ in Theorem \eqref{corob} as a matter of
convenience. A similar result can be formulated more generally.

\subsection{A more explicit formula for the mixing measure $\psi_x$}
We claim that $\psi_x$, defined by \eqref{psix}, also is given by
\begin{equation}
  \label{psix'}
  \psi_x = \sum _{\lambda \in \NN_n(T)} n^n M(\epsilon_x,\lambda) \,
\delta_{\lambda/n},
\end{equation}
for any finite set $T$ such that $x_1,\dots,x_n\in T$.

Indeed, if we temporarily denote the right-hand side of \eqref{psix'} by
$\psi'_x$, then, using \eqref{unu=},
\begin{equation}\label{psi'a}
  \begin{split}
\int_{\PP(S)} \tau^n\, \psi'_x(d\tau)
= \sum_{\lambda \in \NN_n(T)} n^n M(\epsilon_x,\lambda)
\, (\lambda/n)^n
=\bm u_{\epsilon_x}
= \bm U_x,  	
  \end{split}
\end{equation}
where the last equality follows from the definitions. 
By \eqref{psi'a} and \eqref{Nsmall=},
\begin{equation}
  \label{psipsi}
\int_{\PP(S)} \tau^n\, \psi'_x(d\tau)
=\int_{\PP(S)} \tau^n\, \psi_x(d\tau). 
\end{equation}
Now note that if $\lambda\in\NN(n)$, then
$\phi_x(\lambda/n)\in\frac{1}n\NN(T)$, c.f.\ Section \ref{SSpf2},
and thus the measures $\psi_x$ and $\psi'_x$ both are supported on
$\frac{1}n\NN(T)$. 
Moreover,
the measures $\bm u_\nu$, $\nu\in\NN(T)$, are linearly dependent
and form thus a basis in a linear space of dimension $|\NN(T)|$. By
\eqref{plus=} and \eqref{unu=}, the measures $\lambda^n$,
$\lambda\in\NN(T)$,
span the same space, so they form another basis and are therefore linearly
independent. 
Hence, the measures $\tau^n$, $\tau\in\frac{1}n\NN(T)$,
are linearly independent.
Consequently, the equality \eqref{psipsi}
implies that $\psi'_x=\psi_x$, as claimed.

In particular, we can in \eqref{psix'} always choose
$T=T_x:=\{x_1,\dots,x_n\}$. 

\subsection{The canonical mixing measure}\label{canon}
This is the particular signed measure $\xi$ constructed as the mean measure 
of the random signed measure $\psi_X$, given by \eqref{psix}.

For example, let $n=2$.
An easy computation of the matrix $W$ and its inverse $M$ when $T=\{1,2\}$
shows that
$M_2(\lambda):=M(\nu_2, \lambda) =
-1/8,\, 1/2,\, -1/8$ 
when $\lambda = 2\delta_{1},\, \delta_{1}+\delta_{2},\,
2\delta_{2}$, respectively.
So, we have
\begin{equation}
  \label{psi2}
\psi_{(X_1,X_2)} = -\frac12 \delta_{\delta_{X_1}}
-\frac12 \delta_{\delta_{X_2}} + 2 \delta_{(\delta_{X_1}+\delta_{X_2})/2}.
\end{equation}
Hence, 
\begin{equation}\label{xi2}
\xi= 2 \P((\delta_{X_1}+\delta_{X_2})/2\in \cdot)
-  \P(\delta_{X_1} \in\cdot).
\end{equation}

We can also use the formula \eqref{psix'} for $\psi_X$, taking $T=T_X$.
Let $d(X)$ be the cardinality of the set $\{X_1, \ldots, X_n\}$.
On the event $\{d(X)=d\}$, for
some $d \in \{1, \ldots,n\}$, the variable $\lambda$ in the summation in
\eqref{psix'} 
ranges over a set of cardinality $\binom{n+d-1}{n}$.

For example, let again $n=2$, assume that the event
$\{d(X)=2\}=\{X_1\neq X_2\}$ is measurable
and let $p := \P(d(X)=2)$.
On the event $\{d(X)=1\}$, we have $T_X=\{X_1\}$ and so $\NN_2(T_X)
=\{2\delta_{X_1}\}$. Hence 
$\psi_X = \delta_{\delta_{X_1}}$.
On the event $\{d(X)=2\}$, we obtain \eqref{psi2}.
This yields the formula, obviously equivalent to \eqref{xi2}, 
\begin{multline*}
\xi=(1-p) \P(\delta_{X_1} \in \cdot \mid d(X)=1) 
- p\, \P(\delta_{X_1} \in \cdot \mid d(X)=2) 
\\
+ 2p \P((\delta_{X_1}+\delta_{X_2})/2\in \cdot \mid d(X)=2).
\
\end{multline*}

\subsection{Moment functional}
The $k$-th moment functional of a mixing signed measure $\xi$ is defined by
\[
C_k(B_1, \ldots, B_k) := \int_{\PP(S)} \pi(B_1) \cdots \pi(B_k)\, \xi(d\pi).
\]
If $k \le n$, then, from \eqref{xi},
\[
C_k(B_1, \ldots, B_k) = \P(X\in B_1\times \cdots \times B_k).
\]
This means that any mixing measure $\xi$ will have 
the same $C_k$ for all $k \le n$.
But if $k > n$, then $C_k(B_1, \ldots, B_k)$ may be negative and will
depend on the choice of $\xi$. For the canonical $\xi$, we have,
using \eqref{psix},
\begin{align*}
C_k(B_1, \ldots, B_k) 
&= \E \sum_{\lambda \in \NN_n(T_X)} n^n M(\epsilon_X, \lambda)
(\lambda/n)(B_1) \cdots (\lambda/n)(B_k)
\\
&= \E \sum_{\lambda \in \NN_n(T_X)}
 M(\epsilon_X, \lambda)
\lambda(B_1) \cdots \lambda(B_k).
\end{align*}

\subsection{Laplace functional}
Define next the Laplace functional of the 
canonical mixing measure $\xi$ by
\[ 
\Lambda(f) := \xi\bigg[\exp \left(-\int_S f(a) \pi(da)\right)\bigg]
= \int_{\PP(S)}  e^{-\int_S f(a) \pi(da)}\, \xi(d\pi),
\]
for $f: S \to \R_+$  measurable.
We obtain
\begin{equation}
  \begin{split}
\Lambda(f) 
&= 
\E \int_{\PP(S)}  e^{-\int_S f(a) \pi(da)}\, \psi_X(d\pi)
=
 \E \sum_{\lambda \in \NN_n(n)} n^nM_n(\lambda) \,
e^{-\frac{1}{n} \int_S f d \phi_x(\lambda/n)}.
\\&
=
 \E \sum_{\lambda \in \NN_n(n)} n^nM_n(\lambda) \,
e^{-\frac{1}{n} \sum_{i=1}^n \lambda_i f(x_i)}.
  \end{split}
\end{equation}
For example, if $n=2$, by the values of $M_2(\lambda)$ in Section
\ref{canon} and symmetry,
\[
\Lambda(f) = 2 \E \big[e^{-f(X_1)/2} e^{-f(X_2)/2}\big] - \E e^{-f(X_1)}.
\]

\subsection{Extendibility}
It is easy to see that an $n$-exchangeable random vectors may not be
extendible to an $N$-exchangeable random vector (see Remark \ref{rem1}). 
Here are two easy examples.
\begin{example}
\label{lastex1}
As in Example \ref{nonu},
with $S=\{1,2\}$, the random variable $(X_1,X_2)$ taking values in $S^2$,
such that $\P(X=(1,2))=\P(X=(2,1))=1/2$,
cannot be extended to an
exchangeable random variable $(X_1,X_2,X_3)$ with values in $S^3$.
\end{example}
\begin{example}
\label{lastex2}
Let $(X_1, X_2)$ be a Gaussian vector with $\E X_1=\E X_2 =0$,
$\E X_1^2 = \E X_2^2 = 1+\epsilon$, $\epsilon \in (0,1)$,
$\E X_1 X_2 = -1$.
If this were extendible to an exchangeable vector $(X_1, X_2, X_3)$,
then we would have had $\E X_1 X_3 = \E X_2 X_3 = -1$.
An easy calculation shows that the matrix
$\begin{pmatrix} 1+\epsilon & -1 & -1 \\ -1 & 1+\epsilon & -1 \\
-1 & -1 & 1+\epsilon\end{pmatrix}$ is not positive definite
when $\epsilon<1$, 
and thus
fails to be the covariance matrix of $(X_1, X_2, X_3)$.
\end{example}
In \cite{KYextend} we give a necessary and sufficient condition
for extendibility.

\subsection{Some applications}
Consider the following statement.
\begin{lemma}
Let $(S, \SS)$ be a measurable space, $n$ a positive integer,
and $f: S^n \to \R$ a bounded measurable function
such that $\int_{S^n} f(x_1, \ldots, x_n) P(dx_1) \cdots P(dx_n)=0$
for any probability measure $P$ on $(S, \SS)$.
Then $\int_{S^n} f d Q =0$ for any exchangeable probability measure
$Q$ on $S^n$.
\end{lemma}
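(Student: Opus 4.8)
The plan is to read the statement off directly from the representation Theorem~\ref{mth}. Taking $X$ to be the identity map on $(S^n,\SS^n,Q)$, the law of $X$ is $Q$, which is exchangeable, so Theorem~\ref{mth} furnishes a bounded signed measure $\xi\in\MM(\PP(S))$ with
\[
Q(A)=\int_{\PP(S)} \pi^n(A)\,\xi(d\pi),\qquad A\in\SS^n.
\]
Writing $g(\pi):=\int_{S^n} f\,d\pi^n=\int_{S^n} f(x_1,\dots,x_n)\,\pi(dx_1)\cdots\pi(dx_n)$, the hypothesis on $f$ says precisely that $g\equiv0$ on $\PP(S)$. Hence, once the Fubini-type identity
\[
\int_{S^n} f\,dQ=\int_{\PP(S)} g(\pi)\,\xi(d\pi)
\]
is justified, its right-hand side vanishes and the proof is complete.

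Everything therefore reduces to this interchange. First I would record that $g$ is bounded, with $|g(\pi)|\le\|f\|_\infty$, and measurable as a function on $\PP(S)$: for a measurable rectangle $A=B_1\times\cdots\times B_n$ one has, for $f=\1_A$, $g(\pi)=\pi(B_1)\cdots\pi(B_n)$, which is measurable by the very definition of the $\sigma$-algebra on $\PP(S)$; a monotone-class argument extends this to all indicators $f=\1_A$ with $A\in\SS^n$, linearity to simple $f$, and bounded convergence (inside each fixed $\pi^n$) to all bounded measurable $f$. This also shows that $\int_{\PP(S)} g\,d\xi$ is well defined, since $\xi$ is a bounded signed measure.

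To obtain the identity itself, I would argue that both sides are linear in $f$ and agree when $f=\1_A$, $A\in\SS^n$ --- which is exactly the representation above. The class of bounded measurable $f$ for which the identity holds is closed under bounded pointwise limits: on the left by bounded convergence for the probability measure $Q$, and on the right by bounded convergence for the finite total-variation measure $|\xi|$, using the uniform bound $|g_k(\pi)|\le\|f_k\|_\infty$. A functional monotone-class theorem then yields the identity for every bounded measurable $f$.

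The one point needing care --- and the \emph{main, if modest, obstacle} --- is precisely this passage from sets to functions against a \emph{signed} measure: because $\xi$ is not a probability measure, I would apply the convergence and monotone-class machinery to its Jordan decomposition $\xi=\xi^+-\xi^-$ (equivalently, control everything by $|\xi|$). With the identity in hand, substituting $g\equiv0$ gives $\int_{S^n} f\,dQ=0$, as required.
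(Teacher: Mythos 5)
Your proof is correct and follows exactly the route the paper intends: the paper dispatches this lemma with the single remark that ``it follows immediately from Theorem~\ref{mth},'' and your argument is precisely that deduction, with the routine extension from indicators to bounded measurable $f$ (monotone class plus Jordan decomposition of the bounded signed measure $\xi$) written out in full.
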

Although this can be proven by other methods, it follows 
immediately from Theorem \ref{mth}.

For a more practical application, we
refer to the paper of Kerns and Sz\'ekely \cite{KS06}
for an application of Theorem \ref{mth} to the Bayesian consistency
problem. In situations where one has a fixed number $n$ of 
unordered samples, one can refer to de Finetti's theorem
in order to prove consistency of standard Bayesian estimators.
The theorem assumes that the samples come from random vectors
that are infinitely extendible (otherwise, de Finetti's theorem
does not hold). As pointed out in \cite{KS06},
the result of Theorem \ref{mth} still allows proving 
Bayesian consistency.

For a practical application of the representation result
to the Bayesian properties of normalized maximum likelihood,
see Barron, Ross and Watanabe \cite{BRW14}.

\subsection{Open problems}
Estimate the size (in terms of total variation) of
the signed measure $\xi$ in the representation \eqref{xi}.
How does this behave as a function of the dimension $n$?
What is the best bound?

While this paper deals with a probability measure on $S^n$ that
is invariant under all $n!$ permutations of coordinates, 
it is natural to ask
if there is a representation result for measures that are
invariant under a subgroup of the symmetric group.

\appendix
\section{Invertibility of the multinomial Dyson matrix}
Let $T$ be a finite set, say $T=\{1,\ldots,d\}$.
With the notation established in the introduction, 
\begin{equation}
\label{Wdef}
W(\lambda, \nu) := \binom{n}{\nu} \, \lambda^\nu,
\quad \lambda, \nu \in \NN_n(T).
\end{equation}
The matrix $[n^{-n} W(\lambda,\nu)]$  on $\NN_n(T)$
is referred to as the multinomial Dyson matrix \cite{SHMR86}. In fact,
$n^{-n} W(\lambda,\nu)$ is the 1-step transition probability
of a multitype Wright-Fisher Markov chain with state space $\NN_n(T)$.
This chain is defined as follows (see, e.g., \cite{DAW}). 
There is a population
of always constant size $n$. Individuals in this population are of
different types; the set of types is $T$.
Given the vector $\lambda=(\lambda_1, \ldots, \lambda_d)$ of type counts
of the population currently, select an individual at random and copy its type;
do this selection $n$ times, independently.
Then the probability that the vector of type counts
changes from $\lambda$ to $\nu$ is exactly equal to $n^{-n} W(\lambda,\nu)$.
Shelton {\em et al.} \cite{SHMR86} show that the matrix $W$ is invertible, 
i.e., that
there is a matrix $M$ on $\NN_n(T)$ such that
\begin{equation}
\label{minus}
\sum_{\lambda \in \NN_n(T)} M(\nu,\lambda)\, W(\lambda,\nu')
= \1_{\nu=\nu'}, \quad \nu, \nu' \in \NN_n(T).
\end{equation}
The inverse matrix $M$ can be expressed explicitly in terms
of sums involving binomial coefficients and signed Stirling numbers of
the first kind \cite[eq.\ (25)]{MOAK90}.

For a direct proof of the invertibility of $W$ that avoids explicit
computations we proceed as follows. The columns of $W$
are linearly independent if and only if the only numbers
$c(\lambda), \lambda \in \NN_n(T)$, for which
\[
\sum_\lambda c(\lambda) \binom{n}{\nu} \lambda^\nu =0,\quad \text{for all }
\nu\in \NN_n(T)
\]
are zero.
But the last display is equivalent to
\[
0 = \sum_\nu x^\nu \sum_\lambda c(\lambda) \binom{n}{\nu} \lambda^\nu
= \sum_\lambda c(\lambda) (\lambda_1 x_1 + \cdots + \lambda_d x_d)^n, \quad \text{for all } x=(x_1,\ldots,x_d)
\in \R^d.
\]
Invertibility of $W$ thus follows from:
\begin{theorem}
\label{hompol}
Let $d, n$ be positive integers.
Let $\NN_n(d)$ be the set of all $\lambda=(\lambda_1,\ldots,\lambda_d)$
where the $\lambda_i$ are nonnegative integers such that $\sum_{i=1}^d \lambda_i=n$.
Then the polynomials
\[
p_\lambda(x) := (\lambda_1 x_1 + \cdots + \lambda_d x_d)^n, \quad\lambda \in \NN_n(d),
\]
are linearly independent 
and form a basis for the space $\PP_n(d)$ of homogeneous polynomials of
degree $n$ in $x_1,\ldots,x_d$.
\end{theorem}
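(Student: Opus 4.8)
The plan is to first reduce the basis statement to linear independence, and then prove independence by combining an apolarity (differential-operator) pairing with an explicit construction of separating forms. Since $|\NN_n(d)| = \binom{n+d-1}{d-1} = \dim \PP_n(d)$, it suffices to show that the $p_\lambda$ are linearly independent; the spanning/basis claim then follows purely by counting. To detect the coefficients in a hypothetical relation $\sum_{\lambda \in \NN_n(d)} c_\lambda p_\lambda = 0$, I would use the apolarity pairing: for a form $q = \sum_{|\beta|=n} q_\beta x^\beta \in \PP_n(d)$ let $q(\partial) := \sum_{|\beta|=n} q_\beta\, \partial_1^{\beta_1}\cdots \partial_d^{\beta_d}$ be the associated constant-coefficient operator. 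Using that $\partial_1^{\beta_1}\cdots\partial_d^{\beta_d}\, x^\alpha$ equals $\beta_1!\cdots\beta_d!$ when $\alpha=\beta$ and $0$ otherwise (for $|\alpha|=|\beta|=n$), together with the multinomial expansion of $p_\lambda$, a one-line computation gives the identity
\[
q(\partial)\, p_\lambda = n!\, q(\lambda), \qquad \lambda \in \NN_n(d).
\]
Applying $q(\partial)$ to $\sum_\lambda c_\lambda p_\lambda = 0$ therefore yields $\sum_\lambda c_\lambda\, q(\lambda) = 0$ for \emph{every} $q \in \PP_n(d)$.

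The heart of the argument is then to produce, for each fixed $\mu \in \NN_n(d)$, a form $q_\mu \in \PP_n(d)$ that separates $\mu$ from the other evaluation points, i.e.\ $q_\mu(\lambda)=0$ for all $\lambda \in \NN_n(d)\setminus\{\mu\}$ while $q_\mu(\mu)\neq 0$. The key trick is that, on the simplex $\{x:\,x_1+\cdots+x_d=n\}$ (which contains all of $\NN_n(d)$), the affine condition ``the $i$-th coordinate equals $k$'' is cut out by the \emph{homogeneous} linear form
\[
L_{i,k}(x) := n x_i - k\,(x_1 + \cdots + x_d),
\]
because $L_{i,k}(\lambda) = n(\lambda_i - k)$ whenever $\sum_j \lambda_j = n$. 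I would then set
\[
q_\mu := \prod_{i=1}^d \prod_{k=0}^{\mu_i - 1} L_{i,k},
\]
which is homogeneous of degree $\sum_{i=1}^d \mu_i = n$, so $q_\mu \in \PP_n(d)$. For $\lambda \in \NN_n(d)$ one computes $q_\mu(\lambda) = n^n \prod_{i=1}^d \lambda_i(\lambda_i-1)\cdots(\lambda_i - \mu_i + 1)$, a product that vanishes as soon as some $\lambda_i < \mu_i$ and is nonzero otherwise. Since $\lambda$ and $\mu$ both sum to $n$, the inequalities $\lambda_i \ge \mu_i$ for all $i$ force $\lambda = \mu$; hence $q_\mu(\lambda)=0$ for $\lambda \neq \mu$, while $q_\mu(\mu) = n^n \prod_{i=1}^d \mu_i! \neq 0$.

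Substituting $q = q_\mu$ into $\sum_\lambda c_\lambda\, q(\lambda) = 0$ collapses the sum to $c_\mu\, q_\mu(\mu) = 0$, whence $c_\mu = 0$; as $\mu$ was arbitrary, the $p_\lambda$ are linearly independent, and by the dimension count they form a basis of $\PP_n(d)$. I expect the main obstacle to be precisely the construction in the previous paragraph: the separating forms must be homogeneous of degree \emph{exactly} $n$, whereas the naive idea of intersecting ``coordinate $=$ constant'' hyperplanes produces inhomogeneous affine equations of the wrong flavor. The two observations that resolve this are the homogenization $L_{i,k}$ (valid on the simplex, where $\sum_j x_j = n$) and the degree bookkeeping $\sum_i \mu_i = n$, which is exactly what lands $q_\mu$ in $\PP_n(d)$. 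Everything else — the apolarity identity and the final extraction of each coefficient — is routine once these forms are in hand.
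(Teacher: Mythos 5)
Your proof is correct, and it takes a genuinely different route from the paper's. Both arguments exploit the count $|\NN_n(d)|=\binom{n+d-1}{d-1}=\dim\PP_n(d)$, but in opposite directions: the paper proves that the $p_\lambda$ \emph{span} $\PP_n(d)$, so independence follows by counting, whereas you prove \emph{independence} directly, so spanning follows by counting. The paper's spanning argument substitutes $x_i\mapsto x_i+x_d$ and applies finite difference operators $\Delta_{x_{i_1}}\cdots\Delta_{x_{i_k}}$ to $t^n$ at $t=nx_d$: each such iterated difference is a signed sum of the (transformed) $p_\lambda$, and the identity $\Delta_{h_1}\cdots\Delta_{h_k}\{t^n\}=(n)_k\,h_1\cdots h_k\,t^{n-k}+(\text{lower degree in }t)$ allows an induction on the degree in $x_d$ that captures every monomial. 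You instead differentiate rather than difference: the apolarity identity $q(\partial)p_\lambda=n!\,q(\lambda)$ (immediate from the multinomial expansion of $p_\lambda$, as you note) converts a relation $\sum_\lambda c_\lambda p_\lambda=0$ into $\sum_\lambda c_\lambda q(\lambda)=0$ for all $q\in\PP_n(d)$, and your separating forms $q_\mu=\prod_{i}\prod_{k<\mu_i}\bigl(nx_i-k\sum_j x_j\bigr)$ --- homogeneous of degree exactly $\sum_i\mu_i=n$, which is the crucial bookkeeping --- evaluate on $\NN_n(d)$ to $n^n\prod_i\lambda_i(\lambda_i-1)\cdots(\lambda_i-\mu_i+1)$, which vanishes unless $\lambda_i\ge\mu_i$ for all $i$, i.e.\ unless $\lambda=\mu$; this extracts each coefficient $c_\mu$ one at a time. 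There is a pleasant kinship --- your falling-factorial products are the discrete-calculus shadow of the paper's difference operators --- but the two proofs are structurally dual (spanning versus independence). Your route is shorter and buys something the paper's does not: an explicit inversion formula, since for $F\in\PP_n(d)$ written as $\sum_\lambda c_\lambda p_\lambda$ one gets $c_\mu=q_\mu(\partial)F\big/\bigl(n!\,n^n\prod_i\mu_i!\bigr)$, which in the context of the appendix yields an explicit inverse $M$ of the multinomial Dyson matrix $W$ (for which the paper instead cites Moak \cite{MOAK90}). The paper's route, in turn, is entirely elementary --- no differential operators --- and constructively exhibits how each monomial is reached from the $p_\lambda$.
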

\begin{proof}
Note that $\{x^\lambda = x_1^{\lambda_1}\cdots x_d^{\lambda_d}:\, 
\lambda \in \NN_n(d)\}$
is a basis in $\cP_n(d)$.
Let $\cQ_n(d)$ be the linear subspace of $\cP_n(d)$ spanned by
$\{p_\lambda, \lambda \in \NN_n(d)\}$. We will show that
$\cQ_n(d) = \cP_n(d)$.
The substitution $x_i \mapsto x_i+x_d$, $i=1,\ldots,d-1$,
shows that the $\cQ_n(d)$ is 
isomorphic to the subspace $\hcQ_n(d)\subset\cP_n(d)$
 spanned by the polynomials
\[
\widehat p_\lambda(x_1,\ldots,x_d)
:=(\lambda_1 x_1 + \cdots + \lambda_{d-1} x_{d-1} +n x_d)^n,
\quad \lambda \in \NN_n(d).
\]
For $0 \le j \le n$, denote by $\cP_{n,j}(d)$ the subspace of $\cP_n(d)$
consisting of polynomials that have degree in $x_d$ at most $j$.
Note that $\cP_{n,0}(d)=\cP_{n}(d-1)$ and $\cP_{n,n}(d) = \cP_n(d)$.
Let $\Delta_h$ be the difference operator acting on functions $f(t)$
of one real variable $t$ by
$\Delta_h f(t) := f(t+h)-f(t)$.
It is easy to see that, for all integers $k \ge 1$,
\begin{equation}
\label{Delta}
\Delta_{h_1} \cdots \Delta_{h_k} f(t)
= \sum_{r=0}^k (-1)^r \sum_{\substack{I \subset \{1,\ldots,k\}\\ |I|=r}}
f\big(t+\sum_{\alpha \in I} h_\alpha\big).
\end{equation}
Using \eqref{Delta} with $f(t)=t^n$ and induction on $k$ we easily obtain
\begin{equation}
\label{Delta'}
\Delta_{h_1} \cdots \Delta_{h_k} \{t^n\}
= (n)_k \, h_1 \cdots h_k t^{n-k} + r_{h_1,\ldots,h_k}(t),
\quad k=1,\ldots,n,
\end{equation}
where $t \mapsto r_{h_1,\ldots,h_k}(t)$ is a polynomial
of degree $\le n-k-1$,
whereas $(h_1, \ldots, h_k, t) \mapsto r_{h_1,\ldots,h_k}(t)$
is a homogeneous polynomial of degree $n$.
The meaning of \eqref{Delta'} for $k=n$ is that
\begin{equation}
\label{Delta'n}
\Delta_{h_1} \cdots \Delta_{h_n} \{t^n\} = n!\, h_1 \cdots h_n.
\end{equation}
Let $(i_1, \ldots, i_k)$ be a sequence with values in
$\{1,\ldots, d-1\}$.
Using \eqref{Delta}
with $f(t)= t^n$ and then setting $t=nx_d$ and
$h_1 = x_{i_1}, \ldots, h_k=x_{i_k}$ we obtain
\[
\Delta_{x_{i_1}} \cdots \Delta_{x_{i_k}} \{t^n\}\big|_{t=nx_d}
= \sum_{r=0}^k (-1)^r \sum_{\substack{I \subset \{1,\ldots,k\}\\ |I|=r}}
(nx_d+\sum_{\alpha \in I} x_{i_\alpha})^n, \quad 1 \le k \le n.
\]
For $I \subset \{1,\ldots,k\}$,
we have $(nx_d+\sum_{\alpha \in I} x_{i_\alpha})^n
= \widehat p_{\lambda}(x_1,\ldots,x_d)$
where $\lambda_j$ is the number of terms of the sequence
$(i_1, \ldots, i_k)$ that are equal to $j$, $j=1,\ldots, d$.
This implies that
(recall that
$\hcQ_n(d)$ is spanned by $\{\widehat p_\lambda, \lambda \in \NN_n(d)\}$)
the function 
$(x_1, \ldots, x_d) \mapsto \Delta_{x_{i_1}} \cdots \Delta_{x_{i_k}}
\{t^n\}\big|_{t=nx_d}$
is a polynomial in $d$ variables that belongs to $\hcQ_n(d)$.
Using this observation in \eqref{Delta'} and \eqref{Delta'n} we obtain
\begin{align}
n^{n-k}\,(n)_k \, x_{i_1} \cdots x_{i_k} x_d^{n-k}
&=\Delta_{x_{i_1}} \cdots \Delta_{x_{i_k}} \{t^n\}\big|_{t=nx_d}
- r_{x_{i_1},\ldots,x_{i_k}}(nx_d)
\nonumber
\\
& \in \hcQ_n(d) + \cP_{n,n-k-1}(d),
\qquad\qquad 1 \le k \le n-1,
\label{bel}
\end{align}
\begin{equation}
\label{beln}
n!\, x_{i_1} \ldots x_{i_n} = \Delta_{x_{i_1}} \cdots \Delta_{x_{i_n}}
\{t\}\big|_{t=nx_d}
\in \hcQ_n(d).
\end{equation}
Since every polynomial on $\cP_{n,n-k}(d)$ is a linear combination
of the monomials appearing in the left-hand side of \eqref{bel}, 
\eqref{bel} implies
\[
\cP_{n,n-k}(d) \subset \hcQ_n(d) + \cP_{n,n-k-1}(d),
\quad 1 \le k \le n-1.
\]
Similarly, every polynomial on $\cP_{n,0}(d)$ is a linear combination
of monomials as in the left-hand side of \eqref{beln}, and thus
\eqref{beln} implies
\[
\cP_{n,0}(d) \subset \hcQ_n(d).
\]
The last two displays imply that $\cP_{n,n-1}(d) \subset \hcQ_n(d)$.
Since every polynomial in $\cP_n(d)$ is a linear combination of
the monomial $x_d^n$ and a polynomial in $\cP_{n,n-1}(d)$, 
it follows that $\cP_n(d) \subset \hcQ_n(d)$ and so the proof
is completed.
\end{proof}

\subsection*{Acknowledgments}
We thank Jay Kerns and G\'abor Sz\'ekely for their comments 
and for pointing reference \cite{MOAK90} to us and
an anonymous referee for helpful comments and for 
reference \cite{BRW14}.
The second author would like to thank S{\o}ren Asmussen
for giving him the opportunity to present this work
as part of an invited talk at Aarhus University
and for his hospitality.





\end{document}